\newcommand*{\rom}[1]{\expandafter\@slowromancap\romannumeral #1@}
\newtheorem{theorem}{Theorem}[section]
\newtheorem{lemma}[theorem]{Lemma}
\newtheorem{corollary}[theorem]{Corollary}
\newtheorem{definition}{Definition}[section]
\newtheorem{proposition}[theorem]{Proposition}
\newcounter{tmp}
\begin{document}

\title{Flat dimension for power series over valuation rings}

\author{Adam Jones}

\date{\today}

\maketitle

\begin{abstract}
\noindent We examine the power series ring $R[[X]]$ over a valuation ring $R$ of rank 1, with proper, dense value group. We give a counterexample to Hilbert's syzygy theorem for $R[[X]]$, i.e. an $R[[X]]$-module $C$ that is flat over $R$ and has flat dimension at least 2 over $R[[X]]$, contradicting a previously published result. The key ingredient in our construction is an exploration of the valuation theory of $R[[X]]$. We also use this theory to give a new proof that $R[[X]]$ is not a coherent ring, a fact which is essential in our construction of the module $C$.
\end{abstract}

\tableofcontents

\section{Introduction}

Understanding the algebraic and homological properties of the power series ring $R[[X]]$ in a single variable over a ring $R$ is a difficult and ongoing problem, even when $R$ is assumed to be commutative. When $R$ is Noetherian, $R[[X]]$ is a Noetherian ring with associated graded ring $R[Y]$, so its properties are tractable, but in general very little can be concretely said. Even in the nicest non-Noetherian case when $R$ is a valuation ring, it is known that $R[[X]]$ rarely even satisfies the modest property of coherence.\\

\noindent With regard to homological properties, we would like to generalise some of the basic results on the polynomial ring $R[X]$. One particular desirable property, explored in \cite{flaw}, \cite{high-rank}, \cite{noetherian},  and elsewhere is Hilbert's syzygy theorem. To give some context, consider the following well known definition \cite[Definition 4.1.1, Lemma 4.1.6, Lemma 4.1.8]{Weibel}.

\begin{definition}

Let $R$ be a ring, we say that a left $R$-module $M$ has \emph{projective dimension $n$} (proj.dim$_R(M)=n$) if it satisfies either of the folloing equivalent definitions:

\begin{itemize}

\item There exists a projective resolution $0\to P_n\to\dots\to P_1\to P_0\to M\to 0$, and any projective resolution for $M$ has length at least $n$.

\item $Ext_R^{n+1}(M,N)=0$ for all $R$-modules $N$, and $Ext_R^{n}(M,N)\neq 0$ for some $R$-module $N$.

\end{itemize}

\noindent Similarly, we say that $M$ has \emph{flat dimension $n$} (f.dim$_R(M)=n$) if it satisfies either of the following equivalent conditions:

\begin{itemize}

\item There exists a flat resolution $0\to F_n\to\dots\to F_1\to F_0\to M\to 0$, and any flat resolution for $M$ has length at least $n$.

\item $Tor_{n+1}^{R}(M,N)=0$ for all $R$-modules $N$, and $Tor_n^{R}(M,N)\neq 0$ for some $R$-module $N$.

\end{itemize}

\noindent If no such $n$ exists then proj.dim$_R(M)=\infty$ and f.dim$_R(M)=\infty$. The \emph{(left) global dimension} of $R$, gl.dim$(R):=\sup\{$proj.dim$_R(M):M$ a left $R$-module$\}$, and the \emph{weak global dimension} of $R$, w.dim$(R):=\sup\{$f.dim$_R(M):M$ a left $R$-module$\}$.

\end{definition}

\noindent If a ring $R$ is Noetherian, then gl.dim$(R)=$ w.dim$(R)$ \cite[Proposition 4.1.5]{Weibel}, so these concepts only need to be explored separately for non-Noetherian rings. Hilbert's syzygy theorem states that for \emph{any} ring $R$, gl.dim$(R[X])=$ gl.dim$(R)+1$ and w.dim$(R[X])=$ w.dim$(R)+1$, and we would like this result to also hold over the power series ring $R[[X]]$. In the case when $R$ is Noetherian, $R[[X]]$ is also Noetherian, and do indeed have that gl.dim$(R[[X]])=$ gl.dim$(R)+1$ \cite[Theorem 2]{noetherian}. We would like to prove a similar result for the weak dimension of $R[[X]]$ when $R$ is a non-Noetherian ring.\\

\noindent It was proved in \cite[Lemma 1]{high-rank}, that in the case where $R[[X]]$ is a coherent ring (see Definition \ref{coh} below), then w.dim$(R[[X]])$ = w.dim$(R)+1$, but sadly this case is very rare. However, in \cite[Corollary 4.4]{flaw}, it was claimed by Bouchiba that the same result does indeed hold whenever the coefficient ring $R$ is coherent, in fact a stronger version of the syzygy was established; that if $M$ is an $R[[X]]$-module, then f.dim$_{R[[X]]}(M)\leq 1+$ f.dim$_R(M)$ \cite[Corollary 3.2]{flaw}. 

Unfortunately, however, there is a small error in this argument, specifically in the proof of \cite[Theorem 3.1]{flaw}, which claims the existence of an exact sequence of $R[[X]]$-modules $0\to R[[X]]\otimes_R M\to R[[X]]\otimes_R M\to M\to 0$ for any $R[[X]]$-module $M$, which is established by tensoring an exact sequence $0\to R[[X]]\otimes_R R[[X]]\to R[[X]]\otimes_R R[[X]]\to R[[X]]\to 0$ with $M$ over $R[[X]]$. The issue is that the latter exact sequence is only an exact sequence of right $R[[X]]$-modules, so tensoring on the the right by $M$ destroys the $R[[X]]$-module structure, and thus the former sequence is only an exact sequence of $R$-modules, not of $R[[X]]$-modules.\\

\noindent Furthermore, in this paper we can now confirm that \cite[Corollary 3.2]{flaw} is in fact false; in certain cases where $R$ is a coherent domain we can find examples of $R[[X]]$-modules $M$ such that f.dim$_{R[[X]]}(M)>$ f.dim$_R(M)+1$, which is summarised by our main result:

\begingroup
\setcounter{tmp}{\value{theorem}}
\setcounter{theorem}{0} 
\renewcommand\thetheorem{\Alph{theorem}}

\begin{theorem}\label{A}
Let $R$ be a valuation ring of rank 1, whose value group is a proper, dense subgroup of $\mathbb{R}$. Then there exists an $R[[X]]$-module $C$ such that f.dim$_R(C)=0$ and f.dim$_{R[[X]]}(C)\geq 2$.
\end{theorem}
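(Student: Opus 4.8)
The plan is to realise $C$ as a cyclic quotient $C = R[[X]]/I$ for a carefully chosen two-generated ideal $I=(f,g)$, so that I can control the two required properties separately. Writing $v:R\setminus\{0\}\to\Gamma$ for the valuation with value group $\Gamma\subset\mathbb{R}$, the engine of the construction will be the family of Gauss valuations $w_\gamma$ on $S:=R[[X]]$ defined by $w_\gamma(\sum_i a_i X^i)=\min_i\bigl(v(a_i)+i\gamma\bigr)$ for $\gamma>0$; since $v(a_i)\ge 0$ and $i\gamma\to\infty$ the minimum is attained, so each $w_\gamma$ is a genuine valuation with value group $\Gamma+\mathbb{Z}\gamma$. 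Because $\Gamma$ is proper I may fix a gap $\alpha\in\mathbb{R}_{>0}\setminus\Gamma$, and because $\Gamma$ is dense I may fix a strictly increasing sequence $\gamma_0<\gamma_1<\cdots$ in $\Gamma$ with $\gamma_i\nearrow\alpha$, together with elements $a_i\in R$ satisfying $v(a_i)=\gamma_i$. The generators $f,g$ will be built from such a sequence so that their $w_\alpha$-leading data interact only ``in the limit'', which is exactly the behaviour that forces infinitely many independent syzygies.

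First I would record the basic valuation-theoretic lemmas for $w_\alpha$: multiplicativity, the computation of its value group, and a divisibility criterion describing when one power series divides another in terms of $w_\alpha$ and of the contents of coefficients. The key output here is the non-coherence of $S$, which I would prove by exhibiting a finitely generated ideal $I$ whose syzygy module $\Omega=\ker\bigl(S^2\xrightarrow{(f,g)}I\bigr)$ is not finitely generated: the relations $(p,q)$ with $pf+qg=0$ can be graded by $w_\alpha$-value, and the sequence $\gamma_i\nearrow\alpha$ produces, for each $i$, a genuinely new relation at a strictly larger value that no finite set of syzygies can account for. This reproves the non-coherence that the paper flags as essential, and simultaneously hands me the module $\Omega$ that will obstruct flatness.

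Next I would verify f.dim$_R(C)=0$. Over the rank-$1$ valuation domain $R$ a module is flat exactly when it is torsion-free, so it suffices to check that $I$ is $R$-saturated, i.e.\ that $ah\in I$ with $a\in R\setminus\{0\}$ and $h\in S$ forces $h\in I$. This is where I would arrange $f,g$ (passing to a slightly modified $I$ if necessary) so that multiplication by any nonzero $a\in R$ detects membership in $I$ coefficientwise through the valuation, making $C=S/I$ torsion-free and hence $R$-flat.

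Finally, for f.dim$_{S}(C)\ge 2$ I would use $0\to I\to S\to C\to 0$: since $S$ is flat over itself, $\mathrm{Tor}_2^S(C,N)\cong\mathrm{Tor}_1^S(I,N)$ for every $N$, so it is enough to prove that the two-generated ideal $I$ is not flat over $S$. Using the standard criterion that $I$ is flat iff $J\otimes_S I\to JI$ is injective for all finitely generated $J$, I would take $J$ to be a suitable finitely generated ideal built from the same sequence (for instance $J=(f,g)$ itself, or $J=XS+aS$) and exhibit a nonzero element of $\ker\bigl(J\otimes_S I\to JI\bigr)$: the infinitely many independent syzygies found above collapse to relations that hold in $JI$ but, as no finite subfamily suffices, remain nonzero in $J\otimes_S I$. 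The main obstacle is precisely this last step --- converting the \emph{non-finite generation} of $\Omega$ into genuine \emph{non-flatness}, i.e.\ producing an explicit nonzero Tor class, since a non-finitely-generated syzygy does not by itself force non-flatness. This is where the fine structure of $w_\alpha$ at the gap $\alpha$, and the tension between $\alpha\notin\Gamma$ and $\gamma_i\nearrow\alpha$, must be used in an essential way.
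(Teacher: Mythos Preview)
Your proposal has two genuine gaps, one of which you identify yourself.

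The acknowledged gap is converting non-finite generation of the syzygy module $\Omega$ into non-flatness of $I$ over $S=R[[X]]$. You are right that these are not the same thing. There is, however, a clean route you have missed: $S$ is a \emph{local} ring (the non-units are precisely the series whose constant term lies in the maximal ideal of $R$), and over any commutative local ring a finitely generated flat module is free (apply the equational criterion for flatness to a minimal generating set). Hence a finitely generated flat ideal of the domain $S$ would be free of rank $1$, i.e.\ principal. So to obtain $\mathrm{f.dim}_S(S/I)\ge 2$ it would suffice to show that your $I$ is not principal---far more tractable than manufacturing an explicit nonzero class in $J\otimes_S I$.

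The more serious gap, which you gloss over, is the $R$-flatness of $C=S/I$. You say you will ``arrange $f,g$ \dots\ so that multiplication by any nonzero $a\in R$ detects membership in $I$ coefficientwise'', but this is precisely the delicate point, and it is in tension with non-principality. For the obvious candidate---the ideal $J=(f,r)$ used in the incoherence argument, with $r\in R$ and $v(r)>\alpha$---the element $r$ annihilates $\bar 1\in S/J$, so $S/J$ has $R$-torsion and is not $R$-flat. If instead both generators are limit series with $v_0(f)=v_0(g)=\alpha$, you must simultaneously rule out any common divisor in $S$ and guarantee that whenever $ah=pf+qg$ with $a\in R\setminus\{0\}$ one already has $h\in(f,g)$. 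Nothing in your Gauss-valuation bookkeeping provides a mechanism for the latter, and ``passing to a slightly modified $I$ if necessary'' is not an argument.

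The paper sidesteps both difficulties by abandoning cyclic modules. It localises $S$ at $U=\{h:v_0(h)=0\}$ to obtain a valuation ring $T$, and sets $C=T^I/S^I$ for an index set $I$ such that $S^I$ is not $S$-flat (such $I$ exists exactly because $S$ is not coherent). Since $T$ is a valuation ring it is coherent, so $T^I$ is $T$-flat and hence $S$-flat; the long exact Tor sequence attached to $0\to S^I\to T^I\to C\to 0$ then forces $\mathrm{f.dim}_S(C)\ge 2$ with no explicit syzygy or Tor class ever written down. The $R$-flatness of $C$ follows from the purity of $S^I$ in $T^I$ as $R$-modules, which reduces to the one-line check $aT\cap S=aS$ via $v_0$. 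Thus the paper converts incoherence into the desired example by a soft homological manoeuvre, whereas your cyclic approach would require hard control of a specific ideal that you have not supplied.
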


\endgroup

\noindent Before we begin outlining the construction of $C$, let us first recall some important definitions and results.

\begin{definition}

Let $R$ be any ring, $\Gamma$ a totally ordered abelian group. A \emph{filtration} on $R$ is a map $w:R\to\Gamma\cup\{\infty\}$ satisfying for all $r,s\in R$:

\begin{itemize}

\item $w(r+s)\geq\min\{w(r),w(s)\}$.

\item $w(rs)\geq w(r)+w(s)$.

\item $w(1)=0$ and $w(0)=\infty$

\end{itemize}

\noindent Moreover, we say that $w$ is \emph{separated} if $w(r)=\infty$ implies that $r=0$, and we say that $w$ is a \emph{valuation} if $w$ is separated and $w(rs)=w(r)+w(s)$ for all $r,s\in R$.

\end{definition}

\begin{lemma}\label{localisation}
If $R$ is a commutative ring, $U$ is a multiplicatively closed subset of $R$, then the localisation $R_U$ is a flat $R$-module. Also, if $R$ carries a valuation $w$ then $R$ is a domain and $w$ extends uniquely to a valuation of $R_U$.
\end{lemma}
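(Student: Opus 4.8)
The plan is to treat the three assertions separately: flatness of $R_U$ over $R$, the fact that carrying a valuation forces $R$ to be a domain, and the existence and uniqueness of an extension of $w$ to $R_U$.

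For flatness I would use the standard identification of localisation with extension of scalars. First I would recall that for any $R$-module $M$ there is a natural isomorphism $M\otimes_R R_U\cong U^{-1}M$ sending $m\otimes(r/u)$ to $rm/u$. Flatness of $R_U$ is then equivalent to the statement that localisation preserves injections, so I would take an injection $f\colon A\hookrightarrow B$ of $R$-modules and show that the induced map $U^{-1}A\to U^{-1}B$, $a/u\mapsto f(a)/u$, is injective: if $f(a)/u=0$ then some $v\in U$ annihilates $f(a)$, i.e. $f(va)=0$, whence $va=0$ by injectivity and so $a/u=0$. This part is entirely routine.

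For the domain claim I would argue directly from the axioms. Suppose $rs=0$ with $r,s\neq 0$. Separatedness gives that $w(r)$ and $w(s)$ lie in $\Gamma$ (i.e. are finite), so by multiplicativity $w(rs)=w(r)+w(s)\in\Gamma$ is finite; but $rs=0$ forces $w(rs)=\infty$, a contradiction. Hence $R$ has no nonzero zero-divisors. Having established this (so that, assuming $0\notin U$, the ring $R_U$ embeds in the fraction field of $R$ and each element is a genuine fraction $a/u$), I would define the extension by $\tilde w(a/u):=w(a)-w(u)$. The crucial step is well-definedness: if $a/u=b/v$ then $t(av-bu)=0$ for some $t\in U$, and since $R$ is a domain and $t\neq 0$ this yields $av=bu$, hence $w(a)+w(v)=w(b)+w(u)$ and $\tilde w(a/u)=\tilde w(b/v)$. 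I would then verify the valuation axioms for $\tilde w$ by direct computation: multiplicativity is immediate; the ultrametric inequality follows from writing $a/u+b/v=(av+bu)/(uv)$ and applying the inequality for $w$; and separatedness follows since $w(u)$ is finite, so $\tilde w(a/u)=\infty$ forces $w(a)=\infty$ and thus $a=0$. Uniqueness is then automatic, since any valuation $w'$ on $R_U$ extending $w$ must satisfy $w'(a)=w'(a/u)+w'(u)$, giving $w'(a/u)=w(a)-w(u)=\tilde w(a/u)$.

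I expect the only real (and still minor) obstacle to be the bookkeeping in the well-definedness argument, as this is the single point that genuinely relies on the domain property established just beforehand; the verification of the valuation axioms and of uniqueness is then a matter of routine manipulation.
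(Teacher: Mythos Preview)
Your proposal is correct and follows essentially the same route as the paper: the domain claim via $w(rs)=w(r)+w(s)=\infty$, the extension formula $w(a)-w(u)$ with well-definedness checked from $av=bu$, and uniqueness by the forced identity $w'(a/u)=w(a)-w(u)$. The only difference is that the paper cites a reference for flatness of $R_U$, whereas you supply the standard direct argument via exactness of localisation; this is a harmless elaboration rather than a genuinely different approach.
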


\begin{proof}

It is well known that $R_U$ is flat over $R$ (see e.g. \cite[Theorem 3.2.2]{Weibel}). If $w:R\to\Gamma\cup\{\infty\}$ is a valuation on $R$ and $rs=0$ for some $r,s\in R$, then $\infty=w(rs)=w(r)+w(s)$, so $w(r)=\infty$ or $w(s)=\infty$, i.e. $r=0$ or $s=0$ and $R$ is a domain. 

Moreover, define $w':R_U\to\Gamma\cup\{\infty\},ru^{-1}\mapsto w(r)-w(u)$, and this is well-defined since if $ru^{-1}=sv^{-1}$ then $rv=su$ so $w(r)-w(u)=w(s)-w(v)$, and it is straightforward to show that it is a valuation. This extension is unique, because if $v$ is any valuation on $R_U$ such that $v(r)=w(r)$ for all $r\in R$ then $v(ru^{-1})=v(r)-v(u)=w(r)-w(u)=w'(ru^{-1})$ for all $r\in R$, $u\in U$.\end{proof}

\noindent The following result is well known.

\begin{lemma}\label{val-ring}

For any commutative domain $R$, the following are equivalent.

\begin{itemize}
\item There exists a field $F$ with $R\subseteq F$ and a valuation $v:F\to\Gamma\cup\{\infty\}$ such that $R=\{x\in F:v(r)\geq 0\}$.

\item For all ideals $I,J$ of $R$, $I\subseteq J$ or $J\subseteq I$. In particular, all finitely generated ideals are principal.

\item For all $x\in Q(R)$, $x\in R$ or $x^{-1}\in R$.
\end{itemize}

\noindent If $R$ satisfies any of these conditions, we say that $R$ is a \emph{valuation ring}, and we call the group $v(F\backslash\{0\})$ the \emph{value group} of $R$.

\end{lemma}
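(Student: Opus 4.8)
The plan is to establish the three conditions as equivalent through a cyclic chain of implications $(1)\Rightarrow(2)\Rightarrow(3)\Rightarrow(1)$, in which the first two steps are short and the explicit construction of a valuation in the final step carries the real content. Throughout I write $R^\times$ for the group of units and exploit the standing hypothesis that $R$ is a domain, so that the fraction field $Q(R)$ is available.

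For $(1)\Rightarrow(2)$, I assume $R=\{x\in F:v(x)\geq 0\}$ for a valuation $v$ on a field $F\supseteq R$. Given ideals $I,J$ with $I\not\subseteq J$, I pick $a\in I\setminus J$ and show $J\subseteq I$ by taking an arbitrary $b\in J$ and comparing values: if $v(b)\geq v(a)$ then $b/a\in R$ and $b=(b/a)a\in I$, while if $v(a)>v(b)$ then $a/b\in R$ and $a=(a/b)b\in J$, contradicting the choice of $a$. Hence every $b\in J$ lies in $I$. For $(2)\Rightarrow(3)$, given $x=a/b\in Q(R)$ with $a,b\in R$, I compare the principal ideals $(a)$ and $(b)$: the containment $(a)\subseteq(b)$ forces $x\in R$, whereas $(b)\subseteq(a)$ forces $x^{-1}\in R$. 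The same comparison shows a finitely generated ideal $(a_1,\dots,a_n)$ equals whichever $(a_i)$ contains all the others, hence is principal, which disposes of the parenthetical claim.

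The main work is $(3)\Rightarrow(1)$, where I take $F=Q(R)$ and build the value group explicitly as the quotient $\Gamma=F^\times/R^\times$, letting $v\colon F^\times\to\Gamma$ be the quotient map and setting $v(0)=\infty$. Since $\Gamma$ is a quotient of an abelian group, $v$ is automatically multiplicative, giving $v(xy)=v(x)+v(y)$. I order $\Gamma$ by declaring $v(x)\geq v(y)$ exactly when $x/y\in R$; applying condition $(3)$ to the element $x/y$ is precisely what makes this relation total, and one checks routinely that it is well-defined modulo $R^\times$ and compatible with the group operation. By construction $v(x)\geq 0$ if and only if $x\in R$, so $R=\{x\in F:v(x)\geq 0\}$, as required.

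The step I expect to demand the most care is verifying the ultrametric inequality $v(x+y)\geq\min\{v(x),v(y)\}$ for this synthetic valuation: assuming without loss that $v(x)\geq v(y)$ with $y\neq 0$, I divide through by $y$ and use that $x/y\in R$ implies $(x+y)/y=x/y+1\in R$, whence $v(x+y)\geq v(y)$, with the degenerate cases where a term vanishes being immediate. Confirming that the proposed relation on $\Gamma$ is a genuine total order on a totally ordered abelian group — rather than merely a preorder — is the only real subtlety, and it is exactly here that the domain hypothesis and condition $(3)$ are indispensable.
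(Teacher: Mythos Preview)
Your argument is correct and follows the standard textbook route (the cycle $(1)\Rightarrow(2)\Rightarrow(3)\Rightarrow(1)$ with the value group built as $Q(R)^\times/R^\times$). The paper itself offers no proof at all for this lemma, introducing it as ``well known'' and moving on; so there is nothing to compare against, and your write-up would serve perfectly well as the omitted verification.
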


\noindent It is also well known that a valuation ring $R$ has dimension 1 if and only if its value group is a subset of $\mathbb{R}$ with its usual ordering, in which case we say that $R$ has \emph{rank 1}. In this case, the value group is either discrete or else it is dense in $\mathbb{R}$. It is straightforward to see that a valuation ring $R$ is Noetherian if and only if it has rank 1 and the value group is discrete, but there is a related property that is satisfied by general valuation rings:

\begin{definition}\label{coh}

A ring $R$ is \emph{coherent} if for any finitely generated one-sided ideal $I$ of $R$, $I$ is finitely presented, i.e. there exists an exact sequence of $R$-modules $R^m\to R^n\to I\to 0$ for some $m,n\in\mathbb{N}$.

\end{definition}

\noindent Clearly any Noetherian ring is coherent, and since all finitely generated ideals in a valuation ring are principal, it follows that any valuation ring is coherent. It is also known that if $R$ is a valuation ring, then the polynomial ring $R[X]$ is coherent \cite[Theorem 7.3.3]{Glaz}, but sadly there is no similar result for the power series ring $R[[X]]$.

It was proved in \cite[Theorem 1]{high-rank} that if $R$ has rank greater than 1, then $R[[X]]$ is not coherent, and it was shown in \cite[Corollary Section 3]{rank-1} that if $R$ has rank 1 and the value group is dense in $\mathbb{R}$, but not equal to $\mathbb{R}$, then $R[[X]]$ is also not coherent. It is not currently known whether $R[[X]]$ can be coherent when $R$ has value group $\mathbb{R}$.\\

\noindent From now on, we will assume that all filtrations/valuations take values in $\mathbb{R}\cup\{\infty\}$. In particular, if $R$ is a valuation ring we will assume it has rank 1, and we will usually assume that its value group is a proper, dense subgroup of $\mathbb{R}$. In section 2, we will explore some valuation theory, and prove that there is a canonical extension of the valuation on $R$ to the power series ring $R[[X]]$. In section 3, we will use this valuation to give an alternative proof that $R[[X]]$ is not a coherent ring.

Following this, in section 4, we will consider some well-behaved localisations of $R[[X]]$, before using them and some homological algebra in section 5 to construct the $R[[X]]$-module $C$ we need in our main theorem, and the construction of this module will depend strongly on the incoherence of $R[[X]]$; indeed if such a module $C$ did not exist it would follow that $R[[X]]$ was coherent.\\

\noindent\textbf{Acknowledgments:} I am extremely grateful to Samir Bouchiba for several very helpful and fruitful exchanges regarding the arguments in his paper \cite{flaw}. I would also like to thank the Heilbronn Institute for Mathematical Research for funding and supporting this research.

\section{Valuation theory for power series rings}

Throughout this section, we will let $R$ be any commutative ring, and let $v:R\to\mathbb{R}\cup\{\infty\}$ be a valuation such that $v(r)\geq 0$ for all $r\in R$. We want to explore different ways of extending $v$ to the power series ring $R[[X]]$.\\

\noindent For each $\lambda\geq 0$, define a map:

\begin{equation}
v_{\lambda}:R[[X]]\to\mathbb{R}\cup\{\infty\}, \underset{n\in\mathbb{N}}{\sum}{r_nX^n}\mapsto\inf\{v(r_n)+\lambda n:n\in\mathbb{N}\}
\end{equation}

\begin{lemma}\label{attained}
If $\lambda>0$ then $v_{\lambda}\left(\underset{n\in\mathbb{N}}{\sum}{r_nX^n}\right)=v(r_n)+\lambda n$ for some $n$, and there are only finitely many such $n$.
\end{lemma}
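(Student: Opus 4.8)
The plan is to show that the infimum defining $v_{\lambda}(f)$ is in fact a minimum attained on a finite index set, by exploiting that $\lambda>0$ together with the standing positivity hypothesis $v(r)\geq 0$ forces the terms $v(r_n)+\lambda n$ to grow without bound as $n\to\infty$. First I would dispose of the trivial case: if $f=\sum_n r_nX^n=0$ then every $r_n=0$, so $v(r_n)+\lambda n=\infty$ for all $n$ and $v_{\lambda}(f)=\infty$; I therefore assume $f\neq 0$, so that some coefficient $r_{n_0}$ is nonzero and $a:=v(r_{n_0})+\lambda n_0$ is a finite real number bounding the infimum from above, $v_{\lambda}(f)\leq a$.

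Next I would localize the competition for the infimum to a finite index set. Since $v(r_n)\geq 0$ for every $n$, we have $v(r_n)+\lambda n\geq \lambda n$, so whenever $n>a/\lambda$ the term $v(r_n)+\lambda n$ strictly exceeds $a$. Defining $T:=\{n\in\mathbb{N}:v(r_n)+\lambda n\leq a\}$, this inequality shows $T\subseteq\{0,1,\dots,\lfloor a/\lambda\rfloor\}$, so $T$ is finite, and it is nonempty because $n_0\in T$.

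Finally I would identify the infimum with the minimum over $T$. As $T$ is finite and nonempty, $m:=\min_{n\in T}(v(r_n)+\lambda n)$ exists and is attained at some $n^*\in T$. For $n\in T$ we have $v(r_n)+\lambda n\geq m$ by definition, and for $n\notin T$ we have $v(r_n)+\lambda n>a\geq m$, the last inequality holding since $m\leq v(r_{n_0})+\lambda n_0=a$; hence $m$ is a lower bound for all the terms, giving $v_{\lambda}(f)\geq m$, while $v_{\lambda}(f)\leq v(r_{n^*})+\lambda n^*=m$ because $m$ is one of the terms. Thus $v_{\lambda}(f)=m$ is attained. Moreover, any index $n$ with $v(r_n)+\lambda n=v_{\lambda}(f)=m\leq a$ lies in $T$, so the set of such $n$ is contained in the finite set $T$ and is therefore finite.

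I do not anticipate a genuine obstacle here; the one point requiring care is the role of the hypotheses $\lambda>0$ and $v\geq 0$, which together are precisely what prevents the infimum from being an unattained limit. Dropping either would permit infinitely many indices to approach the infimum from above (for instance with $\lambda=0$ and values $v(r_n)\searrow 0$), so the write-up should make explicit where each hypothesis is used.
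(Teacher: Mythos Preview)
Your proof is correct and follows essentially the same approach as the paper: both arguments show that the terms $v(r_n)+\lambda n$ eventually exceed a fixed finite value, so the infimum reduces to a minimum over a finite index set. The only cosmetic differences are that the paper anchors at the minimal index $m$ with $r_m\neq 0$ and chooses the cutoff $k$ via $v(r_m)<(k-m)\lambda$, whereas you anchor at an arbitrary nonzero coefficient and cut off at $\lfloor a/\lambda\rfloor$; your version is slightly more explicit about the $f=0$ case and about where the hypotheses $\lambda>0$ and $v\geq 0$ enter.
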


\begin{proof}

Let $m$ be minimal such that $r_m\neq 0$, and choose $k> m$ minimal such that $v(r_m)<(k-m)\lambda$, and hence $v(r_m)<(k'-m)\lambda$ for all $k'\geq k$.\\

\noindent Thus $v(r_{k'})+\lambda k'=v(r_{k'})+\lambda(k'-m)+\lambda m> v(r_m)+\lambda m$ for all $k'\geq k$, and it follows that $\inf\{v(r_n)+\lambda n:n\in\mathbb{N}\}=\min\{v(r_n)+\lambda n:m\leq n <k\}$.

Moreover, since $v(r_k')+\lambda k'>v(r_m)+\lambda m\geq\min\{v(r_n)+\lambda n:m\leq n <k\}$ for all $k'\geq k$, it follows that the infimum is attained only in the finite set $\{n\in\mathbb{N}:m\leq n <k\}$.\end{proof}

\noindent Note that this lemma is false if $\lambda=0$ and $v$ is not discrete.

\begin{lemma}\label{filtration}
For each $\lambda\geq 0$, $v_{\lambda}$ is a separated ring filtration.
\end{lemma}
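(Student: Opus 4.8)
The plan is to verify directly the four conditions required of a separated filtration: the two defining inequalities, the normalisation $v_\lambda(1)=0$, $v_\lambda(0)=\infty$, and separation. Throughout I would write $f=\sum_n r_n X^n$ and $g=\sum_n s_n X^n$ for elements of $R[[X]]$, and use freely that the valuation $v$ on $R$ satisfies $v(a+b)\geq\min\{v(a),v(b)\}$ and $v(ab)=v(a)+v(b)$, together with the fact, immediate by induction, that $v\left(\sum_k a_k\right)\geq\min_k v(a_k)$ for any finite sum.

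For additivity, the $n$-th coefficient of $f+g$ is $r_n+s_n$, so for every $n$ we have $v(r_n+s_n)+\lambda n\geq\min\{v(r_n)+\lambda n,\,v(s_n)+\lambda n\}\geq\min\{v_\lambda(f),v_\lambda(g)\}$; taking the infimum over $n$ on the left then gives $v_\lambda(f+g)\geq\min\{v_\lambda(f),v_\lambda(g)\}$. I would emphasise that this argument, like all those below, produces only a lower bound, so it is valid whether or not the relevant infimum is attained — a point that matters precisely when $\lambda=0$, where Lemma \ref{attained} does not apply.

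The multiplicativity inequality is the main computation. The $n$-th coefficient of $fg$ is the finite sum $t_n=\sum_{i+j=n} r_i s_j$, so by subadditivity and multiplicativity of $v$,
\[
v(t_n)+\lambda n\ \geq\ \min_{i+j=n}\bigl(v(r_i)+v(s_j)\bigr)+\lambda n=\min_{i+j=n}\bigl((v(r_i)+\lambda i)+(v(s_j)+\lambda j)\bigr).
\]
Each bracketed term is bounded below by $v_\lambda(f)+v_\lambda(g)$, hence $v(t_n)+\lambda n\geq v_\lambda(f)+v_\lambda(g)$ for every $n$, and passing to the infimum over $n$ yields $v_\lambda(fg)\geq v_\lambda(f)+v_\lambda(g)$. (When $f$ or $g$ is $0$ the corresponding value is $\infty$ and the inequality holds trivially under the usual conventions for arithmetic with $\infty$.)

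For the normalisation, $1=1\cdot X^0$ gives $v_\lambda(1)=v(1)+\lambda\cdot 0=0$, while every coefficient of $0$ is $0$, so $v_\lambda(0)=\inf\{v(0)+\lambda n\}=\infty$. Finally, for separation I would suppose $f\neq 0$ and pick an index $m$ with $r_m\neq 0$; since $v$ is a valuation on $R$ it is separated, so $v(r_m)<\infty$, whence $v_\lambda(f)\leq v(r_m)+\lambda m<\infty$. Thus $v_\lambda(f)=\infty$ forces $f=0$, and $v_\lambda$ is a separated filtration. I expect no real obstacle here: the only genuine subtlety is the infimum-not-attained issue flagged above, and since every step needs only a lower bound, the proof goes through uniformly for all $\lambda\geq 0$; the rest is a routine transcription of the axioms satisfied by $v$.
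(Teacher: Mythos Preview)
Your proof is correct and follows essentially the same approach as the paper's: a direct verification of the filtration axioms by bounding each coefficient term and passing to the infimum. Your explicit remark that only lower bounds are needed (so the argument works uniformly even when $\lambda=0$ and the infimum may fail to be attained) is a helpful clarification, but the underlying computation is the same.
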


\begin{proof}

We need to show that for all $f,g\in R[[X]]$, $v_{\lambda}(f+g)\geq\min\{v_{\lambda}(f),v_{\lambda}(g)\}$ and $v_{\lambda}(fg)\geq v_{\lambda}(f)+v_{\lambda}(g)$. Suppose that $f=\underset{n\in\mathbb{N}}{\sum}{r_nX^n}$ and $g=\underset{n\in\mathbb{N}}{\sum}{s_nX^n}$.\\

\noindent Then $v_{\lambda}(f+g)=v_{\lambda}\left(\underset{n\in\mathbb{N}}{\sum}{(r_n+s_n)X^n}\right)=\inf\{v(r_n+s_n)+\lambda n:n\in\mathbb{N}\}$

$\geq\inf\{\min\{v(r_n), v(s_n)\}+\lambda n:n\in\mathbb{N}\}=\inf\{\min\{v(r_n)+\lambda n, v(s_n)+\lambda n\}:n\in\mathbb{N}\}$\\

$\geq\min\{\inf\{v(s_n)+\lambda n:n\in\mathbb{N}\},\inf\{v(r_n)+\lambda n:n\in\mathbb{N}\}\}=\min\{v_{\lambda}(f),v_{\lambda}(g)\}$.\\

\noindent Also, $v_{\lambda}(fg)=v_{\lambda}\left(\underset{n\in\mathbb{N}}{\sum}{\left(\underset{i+j=n}{\sum}{r_is_j}\right)X^n}\right)=\inf\left\{v\left(\underset{i+j=n}{\sum}{r_is_j}\right)+\lambda n:n\in\mathbb{N}\right\}$

$\geq\inf\{\min\{v(r_i)+v(s_j):i+j=n\}+\lambda n:n\in\mathbb{N}\}$\\

$=\inf\{\min\{v(r_i)+v(s_j)+\lambda n:i+j=n\}:n\in\mathbb{N}\}$\\

$\geq \inf\{\min\{v(r_i)+\lambda i+v(s_j)+\lambda j: i,j\in\mathbb{N}\}$\\

$=\inf\{\min\{v(r_i)+\lambda i: i\in\mathbb{N}\}+\inf\{\min\{v(s_j)+\lambda j: j\in\mathbb{N}\}=v_{\lambda}(f)+v_{\lambda}(g)$.\\

\noindent Also, $v_{\lambda}(f)=\infty$ if and only if $v(r_n)+\lambda n=\infty$ for all $n$, which is true if and only if $r_n=0$ for all $n$ and $f=0$, while $v_{\lambda}(1)=v(1)=0$, so $v_{\lambda}$ is a separated filtration.\end{proof}

\noindent So, for each $\lambda,\alpha\geq 0$, let $F_{\alpha}^{\lambda}R[[X]]:=\{f\in R[[X]]:v_{\lambda}(f)\geq\alpha\}$ and $F_{\alpha^+}^{\lambda}R[[X]]:=\{f\in R[[X]]:v_{\lambda}(f)>\alpha\}$. Then $F_{\alpha^+}^{\lambda}R[[X]]$ is an additive subgroup of $F_{\alpha}^{\lambda}R[[X]]$, so we can define the \emph{associated graded ring}:

\begin{equation}
\text{gr}_{\lambda}\text{ }R[[X]]:=\underset{\alpha\geq 0}{\bigoplus}{\frac{F_{\alpha}^{\lambda}R[[X]]}{F_{\alpha^+}^{\lambda}R[[X]]}}
\end{equation}

\noindent This is clearly an $\mathbb{R}_{\geq 0}$-graded abelian group, and it carries a ring structure defined by $(f+F_{\alpha^+}^{\lambda}R[[X]])\cdot (g+F_{\beta^+}^{\lambda}R[[X]])=fg+F_{(\alpha+\beta)^+}^{\lambda}R[[X]]$.

\begin{proposition}\label{valuation1}
For each $\lambda>0$, there exists an isomorphism $\Theta_{\lambda}:\text{gr}_{\lambda}\text{ }R[[X]]\to (\text{gr }R)[Y]$, and it follows that $v_{\lambda}$ is a valuation.
\end{proposition}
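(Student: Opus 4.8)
The plan is to construct $\Theta_\lambda$ explicitly on homogeneous components, verify it is a well-defined graded ring isomorphism, and then deduce that $v_\lambda$ is a valuation from the standard fact that a separated filtration whose associated graded ring is a domain is automatically a valuation.

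First I would fix notation for the associated graded ring $\text{gr } R=\bigoplus_{\beta\geq 0}F_\beta R/F_{\beta^+}R$ of $v$, writing $\sigma_\beta:F_\beta R\to F_\beta R/F_{\beta^+}R$ for the symbol map, and I would regard $(\text{gr } R)[Y]$ as an $\mathbb{R}_{\geq 0}$-graded ring by declaring $\deg Y=\lambda$, so that its degree-$\alpha$ component is the finite direct sum $\bigoplus_{n:\alpha-\lambda n\geq 0}(\text{gr } R)_{\alpha-\lambda n}Y^n$ (finite precisely because $\lambda>0$). I would then define $\Theta_\lambda$ on the degree-$\alpha$ piece of $\text{gr}_\lambda R[[X]]$ by
\[
\Theta_\lambda\Big(\sum_n r_nX^n+F_{\alpha^+}^\lambda R[[X]]\Big)=\sum_n\sigma_{\alpha-\lambda n}(r_n)\,Y^n,
\]
where $\sum_n r_nX^n\in F_\alpha^\lambda R[[X]]$, so that $v(r_n)\geq\alpha-\lambda n$ for every $n$ and each symbol is defined. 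The point that makes this land in $(\text{gr } R)[Y]$ rather than a power series ring in $Y$ is Lemma \ref{attained}: only finitely many $n$ satisfy $v(r_n)+\lambda n=\alpha$, and all other terms $\sigma_{\alpha-\lambda n}(r_n)$ vanish, so the right-hand side is a genuine polynomial. Extending additively over the grading gives $\Theta_\lambda:\text{gr}_\lambda R[[X]]\to(\text{gr } R)[Y]$.

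Next I would check the required properties degreewise. Well-definedness reduces to observing that if $v_\lambda(\sum r_nX^n)>\alpha$ then $v(r_n)>\alpha-\lambda n$, whence every $\sigma_{\alpha-\lambda n}(r_n)=0$; additivity is inherited from that of the maps $\sigma_\beta$. Injectivity follows because a vanishing image forces $\sigma_{\alpha-\lambda n}(r_n)=0$, hence $v(r_n)+\lambda n>\alpha$, for all $n$, so $v_\lambda(\sum r_nX^n)>\alpha$; surjectivity follows by lifting each homogeneous coefficient of a target polynomial to an element of $R$ of the correct value and assembling the corresponding polynomial in $X$. The multiplicativity is the heart of the argument: for homogeneous representatives $f=\sum r_iX^i$ and $g=\sum s_jX^j$ of degrees $\alpha,\beta$, the $Y^n$-coefficient of $\Theta_\lambda(\bar f)\Theta_\lambda(\bar g)$ is $\sum_{i+j=n}\sigma_{\alpha-\lambda i}(r_i)\sigma_{\beta-\lambda j}(s_j)$, and using the product rule $\sigma_\gamma(a)\sigma_\delta(b)=\sigma_{\gamma+\delta}(ab)$ in $\text{gr } R$ together with additivity this collapses to $\sigma_{(\alpha+\beta)-\lambda n}\big(\sum_{i+j=n}r_is_j\big)$, which is exactly the $Y^n$-coefficient of $\Theta_\lambda(\overline{fg})$. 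Here I would record that each product $r_is_j$ lies in $F_{(\alpha+\beta)-\lambda n}R$ since $v(r_is_j)=v(r_i)+v(s_j)\geq(\alpha-\lambda i)+(\beta-\lambda j)$, so the symbol manipulations are legitimate.

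Finally, to conclude that $v_\lambda$ is a valuation, I would use that $v$ being a valuation makes $\text{gr } R$ a graded domain, since the product of nonzero leading symbols $\sigma_\gamma(r)\sigma_\delta(s)=\sigma_{\gamma+\delta}(rs)$ is nonzero as $v(rs)=\gamma+\delta$; hence $(\text{gr } R)[Y]\cong\text{gr}_\lambda R[[X]]$ is a domain. As $v_\lambda$ is already a separated filtration by Lemma \ref{filtration}, a standard leading-term argument then yields $v_\lambda(fg)=v_\lambda(f)+v_\lambda(g)$: for nonzero $f,g$ the symbols $\bar f,\bar g$ are nonzero homogeneous elements in degrees $v_\lambda(f),v_\lambda(g)$, their product $\overline{fg}$ is nonzero by the domain property, so $fg\notin F_{(v_\lambda(f)+v_\lambda(g))^+}^\lambda R[[X]]$, forcing equality in the inequality $v_\lambda(fg)\geq v_\lambda(f)+v_\lambda(g)$. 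I expect the multiplicativity of $\Theta_\lambda$ to be the main obstacle, as it requires carefully matching the convolution defining multiplication in $R[[X]]$ with the graded multiplication of $(\text{gr } R)[Y]$ and justifying each symbol identity via the valuation axiom for $v$.
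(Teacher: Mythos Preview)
Your proposal is correct and follows essentially the same route as the paper: define $\Theta_\lambda$ on homogeneous pieces via symbol maps, use Lemma~\ref{attained} to ensure the image is a polynomial, verify the ring-homomorphism axioms degreewise, and then conclude that $v_\lambda$ is a valuation because its associated graded is a domain. The only point worth flagging is that in your injectivity step, passing from ``$v(r_n)+\lambda n>\alpha$ for all $n$'' to ``$v_\lambda(\sum r_nX^n)>\alpha$'' again relies on Lemma~\ref{attained} (the infimum is actually attained), which you should make explicit; the paper does invoke the lemma at precisely this point.
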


\begin{proof}

Define $\Theta_{\lambda}:\text{gr}_{\lambda}\text{ }R[[X]]\to (\text{gr }R)[Y],\underset{n\in\mathbb{N}}{\sum}{r_nX^n}+F_{\alpha^+}^{\lambda}R[[X]]\mapsto\underset{n\in\mathbb{N}}{\sum}{\left(r_n+F_{(\alpha-\lambda n)^+}R\right)Y^n}$.\\

\noindent Then $\Theta_{\lambda}$ is well-defined because since $\lambda>0$, it follows from Lemma \ref{attained} that the set 

\begin{center}
$\mathcal{X}:=\left\{n\in\mathbb{N}:v_{\lambda}\left(\underset{n\in\mathbb{N}}{\sum}{r_nX^n}\right)=v(r_n)+\lambda n\right\}$
\end{center}

\noindent is non-empty and finite. So if $v\left(\underset{n\in\mathbb{N}}{\sum}{r_nX^n}\right)\geq \alpha$ then for each $n\in\mathbb{N}$, $v(r_n)+\lambda n\geq\alpha$ so $r_n\in F_{(\alpha-\lambda n)}R$, and $r_n+F_{(\alpha-\lambda n)^+}R\neq 0$ if and only if $v_{\lambda}\left(\underset{n\in\mathbb{N}}{\sum}{r_nX^n}\right)=\alpha$ and $n\in \mathcal{X}$, thus $\underset{n\in\mathbb{N}}{\sum}{\left(r_n+F_{(\alpha-\lambda n)^+}R\right)Y^n}$ is a finite sum, so it lies in $(\text{gr }R)[Y]$.  And of course the map extends to the direct sum of the graded pieces.\\

\noindent To show that $\Theta_{\lambda}$ is a ring homomorphism, it suffices to show that for any $A,B\in\text{gr}_{\lambda}\text{ }R[[X]]$ homogeneous, $\Theta_{\lambda}(A+B)=\Theta_{\lambda}(A)+\Theta_{\lambda}(B)$ if $A$ and $B$ have the same degree, and $\Theta_{\lambda}(AB)=\Theta_{\lambda}(A)\Theta_{\lambda}(B)$ regardless of degree. 

Suppose that $A=f+F_{\alpha^+}^{\lambda}R[[X]]$ and $B=g+F_{\beta^+}^{\lambda}R[[X]]$ for some $\alpha,\beta>0$, and suppose that $f=\underset{n\in\mathbb{N}}{\sum}{r_nX^n}$ and $g=\underset{n\in\mathbb{N}}{\sum}{s_nX^n}$.\\

\noindent If $\alpha=\beta$ then $\Theta_{\lambda}(A+B)=\Theta_{\lambda}\left(\underset{n\in\mathbb{N}}{\sum}{(r_n+s_n)X^n}+F_{\alpha^+}^{\lambda}R[[X]]\right)=\underset{n\in\mathbb{N}}{\sum}{\left(r_n+s_n+F_{(\alpha-\lambda n)^+}R\right)Y^n}=\underset{n\in\mathbb{N}}{\sum}{\left(r_n+F_{(\alpha-\lambda n)^+}R\right)Y^n}+\underset{n\in\mathbb{N}}{\sum}{\left(s_n+F_{(\alpha-\lambda n)^+}R\right)Y^n}=\Theta_{\lambda}(A)+\Theta_{\lambda}(B)$.\\

\noindent Also, $\Theta_{\lambda}(AB)=\Theta_{\lambda}\left(\underset{n\in\mathbb{N}}{\sum}{\left(\underset{i+j=n}{\sum}{r_is_j}\right)X^n}+F_{(\alpha+\beta)^+}^{\lambda}R[[X]]\right)=\underset{n\in\mathbb{N}}{\sum}{\left(\underset{i+j=n}{\sum}{r_is_j}+F_{(\alpha+\beta-\lambda n)^+}R\right)Y^n}=\underset{n\in\mathbb{N}}{\sum}{\left(\underset{i+j=n}{\sum}{r_is_j}+F_{(\alpha-\lambda i+\beta-\lambda j)^+}R\right)Y^n}=\left(\underset{i\in\mathbb{N}}{\sum}{\left(r_i+F_{(\alpha-\lambda i)^+}R\right)Y^i}\right)\cdot\left(\underset{j\in\mathbb{N}}{\sum}{\left(s_j+F_{(\beta-\lambda j)^+}R\right)Y^j}\right)=\Theta_{\lambda}(A)\cdot\Theta_{\lambda}(B)$.\\

\noindent To show that $\Theta_{\lambda}$ is injective, suppose that $\Theta_{\lambda}\left(\underset{\alpha\geq 0}{\sum}{\left(\underset{n\in\mathbb{N}}{\sum}{r_{\alpha,n}X^n}+F_{\alpha^+}^{\lambda}R[[X]]\right)}\right)=0$. Then $\underset{\alpha\geq 0}{\sum}{\underset{n\in\mathbb{N}}{\sum}{\left(r_{\alpha,n}+F_{(\alpha-\lambda n)^+}R\right)Y^n}}=0$, so $\underset{\alpha\geq 0}{\sum}{r_{\alpha,n}+F_{(\alpha-\lambda n)^+}R}=0$ for all $n$. 

Thus $r_{\alpha,n}+F_{(\alpha-\lambda n)^+}R=0$ for all $\alpha,n$, so $v(r_{\alpha,n})+\lambda n>\alpha$. So by Lemma \ref{attained} this means that $v_{\lambda}\left(\underset{n\in\mathbb{N}}{\sum}{r_{\alpha,n}X^n}\right)>\alpha$ for all $\alpha$, and hence $\underset{\alpha\geq 0}{\sum}{\left(\underset{n\in\mathbb{N}}{\sum}{r_{\alpha,n}X^n}+F_{\alpha^+}^{\lambda}R[[X]]\right)}=0$.\\

\noindent Finally, if $\underset{n\in\mathbb{N}}{\sum}{A_nY^n}\in(\text{gr }R)[Y]$, with $A_n=r_n+F_{\alpha_n}R$ for each $n$, and we may assume that $v(r_n)=\alpha_n$. For each $\alpha>0$ let $B_{\alpha}=\{n\in\mathbb{N}:\alpha_n+\lambda n=\alpha\}$, then $\Theta_{\lambda}\left(\underset{\alpha>0}{\sum}\left({\underset{n\in B_{\alpha}}{\sum}{r_{n}X^n}+F_{\alpha^+}^{\lambda}R[[X]]}\right)\right)=\underset{n\in\mathbb{N}}{\sum}{A_nY^n}$, thus $\Theta_{\lambda}$ is surjective.\\

\noindent A separated filtration on a ring is a valuation if and only if its associated graded ring is a domain, thus gr $R$ is a domain, and hence $($gr $R)[Y]$ is a domain. So gr$_{\lambda}$ $R[[X]]\cong ($gr $R)[Y]$ is a domain and $v_{\lambda}$ is a valuation.\end{proof}

\noindent Note that this proposition strongly depends on the hypothesis that $\lambda>0$, and there is no similar isomorphism when $\lambda=0$. We now want to use this result to prove that $v_0$ is also a valuation.\\

\noindent Fix $f\in R[[X]]$ and define a map $\chi_{f}:[0,\infty)\to [0,\infty),\lambda\mapsto v_{\lambda}(f)$.

\begin{proposition}\label{convergence}
$\chi_f$ is monotonic increasing on $[0,\infty)$, and is continuous at 0.
\end{proposition}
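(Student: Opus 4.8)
The plan is to treat the two assertions separately, since monotonicity is essentially formal while continuity at $0$ carries the real content. Throughout I write $f=\sum_{n\in\mathbb{N}}r_nX^n$, and I dispose of the trivial case $f=0$ (where $\chi_f\equiv\infty$) at the outset, so that $\alpha:=v_0(f)=\inf\{v(r_n):n\in\mathbb{N}\}$ is a finite non-negative real number.

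For monotonicity, fix $0\le\lambda_1\le\lambda_2$. For each fixed $n$ the quantity $v(r_n)+\lambda n$ is a non-decreasing function of $\lambda$, since $n\ge 0$, so $v(r_n)+\lambda_1 n\le v(r_n)+\lambda_2 n$ for every $n$. Taking the infimum over $n$ on both sides preserves the inequality, giving $\chi_f(\lambda_1)=v_{\lambda_1}(f)\le v_{\lambda_2}(f)=\chi_f(\lambda_2)$, so $\chi_f$ is monotonic increasing on $[0,\infty)$.

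For continuity at $0$ — which, the domain being $[0,\infty)$, means right-continuity — monotonicity already supplies one half for free: since $\chi_f$ is increasing, $\lim_{\lambda\to 0^+}\chi_f(\lambda)$ exists and equals $\inf_{\lambda>0}v_\lambda(f)$, and every $v_\lambda(f)\ge v_0(f)=\alpha$, so $\lim_{\lambda\to0^+}\chi_f(\lambda)\ge\alpha$. The substance is the reverse inequality, which I would prove by an $\varepsilon$-argument: given $\varepsilon>0$, choose an index $N$ with $v(r_N)<\alpha+\tfrac{\varepsilon}{2}$, which is possible because $\alpha$ is the infimum of the $v(r_n)$. If $N=0$ then $v_\lambda(f)\le v(r_0)<\alpha+\varepsilon$ for every $\lambda$; otherwise pick $\lambda<\tfrac{\varepsilon}{2N}$, so that $v_\lambda(f)\le v(r_N)+\lambda N<\alpha+\tfrac{\varepsilon}{2}+\tfrac{\varepsilon}{2}=\alpha+\varepsilon$. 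Hence $\inf_{\lambda>0}v_\lambda(f)\le\alpha+\varepsilon$ for all $\varepsilon>0$, whence $\lim_{\lambda\to0^+}\chi_f(\lambda)\le\alpha$, and combining the two inequalities yields the limit equal to $\alpha=\chi_f(0)$.

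The only delicate point — the main obstacle, such as it is — is this upper bound: although the infimum defining $v_0(f)$ may only be approached along indices $N$ that grow without bound, for any single such $N$ one can still drive the penalty $\lambda N$ below $\tfrac{\varepsilon}{2}$ by shrinking $\lambda$. The key is that $\varepsilon$, and therefore $N$, is fixed \emph{before} $\lambda$ is chosen, so $N$ is a finite number and the choice $\lambda<\varepsilon/(2N)$ is legitimate. No uniformity in $N$ is required, which is precisely why continuity is guaranteed at $0$ but need not hold at positive values of $\lambda$.
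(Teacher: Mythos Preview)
Your proof is correct and follows essentially the same approach as the paper: both arguments use monotonicity to reduce continuity at $0$ to an upper bound, then pick an index $N$ with $v(r_N)$ close to $\alpha$ and shrink $\lambda$ to kill the $\lambda N$ term. Your monotonicity argument is slightly more direct (infimum of non-decreasing functions) where the paper argues by contradiction, but the substance is identical; one minor quibble is your closing remark that continuity ``need not hold at positive values of $\lambda$'' --- in fact $\chi_f$ is an infimum of affine functions and hence concave, so it is automatically continuous on $(0,\infty)$.
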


\begin{proof}

If $f=\underset{n\in\mathbb{N}}{\sum}{r_nX^n}$ and $0\leq\lambda_1\leq\lambda_2$, then $\chi_f(\lambda_j)=\inf\{v(r_n)+\lambda_jn:n\in\mathbb{N}\}$ for $j=1,2$.\\

\noindent So, if $\chi_f(\lambda_2)<\chi_f(\lambda_1)$ then there exists $n\in\mathbb{N}$ such that $v(r_n)+\lambda_2n<v(r_m)+\lambda_1m$ for all $m$. In particular, $v(r_n)+\lambda_2n<v(r_n)+\lambda_1n$ and hence $(\lambda_2-\lambda_1)n<0$, which is impossible since $\lambda_2\geq\lambda_1$.\\

\noindent Therefore $\chi_f(\lambda_1)\leq\chi_f(\lambda_2)$ and hence $\chi_f$ is monotonic increasing. To prove that $\chi_f$ is continuous at 0, we need to prove that for all $\varepsilon>0$, $\vert\chi_f(\lambda)-\chi_f(0)\vert<\varepsilon$ for sufficiently small $\lambda$. 

Since $\chi_f$ is monotonic increasing, we know that $\vert\chi_f(\lambda)-\chi_f(0)\vert=\chi_f(\lambda)-\chi_f(0)$ for all $\lambda$, and if $\chi_f(\lambda')-\chi_f(0)<\varepsilon$ for some $\lambda'$, it follows that $\chi_f(\lambda)-\chi_f(0)\leq\chi_f(\lambda')-\chi_f(0)<\varepsilon$ for all $\lambda\leq\lambda'$. Therefore, it suffices only to prove that for all $\varepsilon>0$, $\chi_f(\lambda)-\chi_f(0)<\varepsilon$ for some $\lambda>0$.\\

\noindent Suppose for contradiction that there exists $\varepsilon>0$ such that $\chi_f(\lambda)\geq\varepsilon+\chi_f(0)$ for all $\lambda$, i.e. $v(r_n)+\lambda n\geq\varepsilon+\chi_f(0)$ for all $\lambda>0$, $n\in\mathbb{N}$.\\

\noindent But $\chi_f(0)=v_0(f)=\inf\{v(r_n):n\in\mathbb{N}\}$, so since $\varepsilon>0$, there exists $n\in\mathbb{N}$ such that $\chi_f(0)\leq v(r_n)<\varepsilon+\chi_f(0)$, and $n>0$ since otherwise $v(r_0)+\lambda 0=v(r_0)<\varepsilon+\chi_f(0)$. So choose $\lambda>0$ with $\lambda<\frac{\varepsilon+\chi_f(0)-v(r_n)}{n}$ and it follows that $v(r_n)+\lambda n<\varepsilon+\chi_f(0)$ -- contradiction.\end{proof}

\begin{corollary}\label{valuation2}
$v_0$ is a valuation on $R[[X]]$.
\end{corollary}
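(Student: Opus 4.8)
The plan is to bootstrap from the case $\lambda>0$, where Proposition \ref{valuation1} already guarantees that $v_\lambda$ is a genuine valuation, down to the boundary case $\lambda=0$ via a one-sided continuity argument. By Lemma \ref{filtration} we already know that $v_0$ is a separated ring filtration, so in particular $v_0(fg)\geq v_0(f)+v_0(g)$ for all $f,g\in R[[X]]$. The only thing left to prove is the reverse inequality, that is, the multiplicativity $v_0(fg)=v_0(f)+v_0(g)$, since separatedness and $v_0(1)=0$ come for free from the filtration axioms.

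First I would dispose of the degenerate case: if $f=0$ or $g=0$, then both sides equal $\infty$ and there is nothing to check, so I assume $f,g\neq 0$. Recalling the notation $\chi_h(\lambda)=v_\lambda(h)$, the key input is that for every $\lambda>0$ the map $v_\lambda$ is a valuation (Proposition \ref{valuation1}), so that the identity $\chi_{fg}(\lambda)=\chi_f(\lambda)+\chi_g(\lambda)$ holds for all $\lambda>0$.

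Next I would pass to the limit as $\lambda\to 0^+$. By Proposition \ref{convergence}, each of $\chi_f$, $\chi_g$, $\chi_{fg}$ is monotonic increasing on $[0,\infty)$ and continuous at $0$, so $\lim_{\lambda\to 0^+}\chi_h(\lambda)=\chi_h(0)=v_0(h)$ for every $h\in\{f,g,fg\}$; moreover each of these values is finite, since $v_0(h)=\inf\{v(r_n)\}$ is bounded below by $0$ and above by $v(r_n)<\infty$ for any nonzero coefficient $r_n$. Taking the limit of the identity $\chi_{fg}(\lambda)=\chi_f(\lambda)+\chi_g(\lambda)$ then yields $v_0(fg)=v_0(f)+v_0(g)$, which is exactly the desired equality, and hence $v_0$ is a valuation.

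Since the heavy lifting, namely the graded-ring isomorphism of Proposition \ref{valuation1} and the continuity at $0$ of Proposition \ref{convergence}, has already been carried out, I do not expect a genuine obstacle here. The only point requiring a little care is the legitimacy of the limit manipulation: one must use the right-hand limit at $0$, supplied by continuity at $0$ together with monotonicity, and confirm that the relevant values are finite so that passing the limit through the sum is unproblematic. Indeed, for $f,g\neq 0$ the values $\chi_f(\lambda),\chi_g(\lambda)$ are finite for all $\lambda$, forcing $\chi_{fg}(\lambda)$ finite and hence $fg\neq 0$, which incidentally re-derives that $R[[X]]$ is a domain.
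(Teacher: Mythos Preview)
Your proof is correct and follows essentially the same approach as the paper: invoke Proposition \ref{valuation1} to get $\chi_{fg}(\lambda)=\chi_f(\lambda)+\chi_g(\lambda)$ for all $\lambda>0$, then use the continuity at $0$ from Proposition \ref{convergence} to pass to the limit and obtain $v_0(fg)=v_0(f)+v_0(g)$. You add a bit of extra care about the degenerate case and the finiteness needed to justify the limit of the sum, but the skeleton is identical to the paper's argument.
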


\begin{proof}

Given $f,g\in R[[X]]$, we need to show that $v_0(fg)=v_0(f)+v_0(g)$. Using Proposition \ref{valuation1}, we know that for all $\lambda>0$, $v_{\lambda}$ is a valuation, so $v_{\lambda}(fg)=v_{\lambda}(f)+v_{\lambda}(g)$. Also, using Proposition \ref{convergence}, we know that $\chi_f$, $\chi_g$ and $\chi_{fg}$ are continuous at zero, so it follows that: \\

\noindent $v_0(fg) =\chi_{fg}(0) =\underset{\lambda\rightarrow 0}{\lim}{\chi_{fg}(\lambda)}\text{ (since $\chi_{fg}$ is continuous at 0)}$\\ 

$=\underset{\lambda\rightarrow 0}{\lim}{\text{ }v_{\lambda}(fg)} =\underset{\lambda\rightarrow 0}{\lim}{\text{ }v_{\lambda}(f)+v_{\lambda}(g)}\text{ (since $v_{\lambda}$ is a valuation for $\lambda>0$)}$\\

$=\underset{\lambda\rightarrow 0}{\lim}{v_{\lambda}(f)}+\underset{\lambda\rightarrow 0}{\lim}{v_{\lambda}(g)}=\underset{\lambda\rightarrow 0}{\lim}{\chi_{f}(\lambda)}+\underset{\lambda\rightarrow 0}{\lim}{\chi_{g}(\lambda)}=\chi_f(0)+\chi_g(0)=v_0(f)+v_0(g)$.\end{proof}

\section{Incoherence of $R[[X]]$}

From now on, let $F$ be a field, let $v:F\to\mathbb{R}\cup\{\infty\}$ be a valuation, and let $R:=\{x\in F:v(x)\geq 0\}$. Then $R$ is a valuation ring of rank 1, and we will assume that the value group $v(F\backslash 0)$ is a proper, dense subgroup of $\mathbb{R}$. Using Corollary \ref{valuation2}, the valuation $v$ on $R$ extends to a valuation $v_0$ of $R[[X]]$ given by $v_0\left(\underset{n\in\mathbb{N}}{\sum}{r_nX^n}\right)=\inf\{v(r_n):n\in\mathbb{N}\}$.\\

\noindent We know that $R$ is a coherent ring, but it was proved in \cite[Corollary Section 3]{rank-1} that $R[[X]]$ is never coherent, and in this section, we will give an alternative (albeit similar) proof using the valuation theory we have developed.\\

\noindent For convenience, we will write $v(F)$ and $v(R)$ to mean $v(F\backslash 0)$ and $v(R\backslash 0)$ respectively. Since $v(F)$ is a proper, dense subset of $\mathbb{R}$, it follows that $v(R)=\{\alpha\in v(F):\alpha\geq 0\}$ is a dense subset of $\mathbb{R}_{\geq 0}$, not equal to $\mathbb{R}_{\geq 0}$. Therefore, there exists an element $\alpha\in\mathbb{R}_{\geq 0}$ such that $\alpha\notin v(R)$, and there exists a sequence of elements $\alpha_n\in v(R)$ such that $\alpha_n\geq\alpha_{n+1}>\alpha$ for all $n$ and $\alpha_n\rightarrow\alpha$ as $n\rightarrow\infty$.\\

\noindent Fix elements $r_n\in R$ such that $v(r_n)=\alpha_n$ for all $n$, and let $f:=\underset{n\in\mathbb{N}}{\sum}{r_nX^{n}}\in R[[X]]$. Then $v_0(f)=\inf\{v(r_n):n\in\mathbb{N}\}=\inf\{\alpha_n:n\in\mathbb{N}\}=\alpha$.\\

\noindent Now, choose any $r\in R$ with $v(r)>\alpha$, and consider the ideal $J:=R[[X]]f+R[[X]]r$. This is clearly a finitely generated ideal of $R[[X]]$. 

\begin{theorem}\label{B}

$J$ is not finitely presented, and hence $R[[X]]$ is not a coherent ring.

\end{theorem}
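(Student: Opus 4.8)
The plan is to prove that the module of relations on the generating pair $(f,r)$ is not finitely generated, which by a standard fact makes $J$ not finitely presented and hence $R[[X]]$ not coherent (Definition \ref{coh}). Concretely, I would consider the surjection $\pi:R[[X]]^2\to J$, $(a,b)\mapsto af+br$, and its kernel $K=\{(a,b):af+br=0\}$. Since a finitely generated module is finitely presented exactly when the kernel of such a finite free presentation is finitely generated (a consequence of Schanuel's lemma), it suffices to show $K$ is not finitely generated. As $R[[X]]$ is a domain (Corollary \ref{valuation2}), the projection $(a,b)\mapsto a$ identifies $K$ with the colon ideal $A:=\{a\in R[[X]]:af\in rR[[X]]\}$; indeed $a=0$ forces $b=0$, and for any $a$ with $af\in rR[[X]]$ the element $b=-af/r$ lies in $R[[X]]$, so this projection is an isomorphism of $R[[X]]$-modules.

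The first substantive step is to describe $A$ via the valuation $v_0$. Since $\rho:=v(r)\in v(R)$, an element $h=\sum h_nX^n$ lies in $rR[[X]]$ iff $v(h_n)\ge\rho$ for all $n$, i.e. iff $v_0(h)\ge\rho$; thus $rR[[X]]=\{h:v_0(h)\ge\rho\}$. Because $v_0$ is a valuation with $v_0(f)=\alpha$, we have $v_0(af)=v_0(a)+\alpha$, so $af\in rR[[X]]$ iff $v_0(a)\ge\rho-\alpha$. Writing $\beta:=\rho-\alpha$ gives $A=\{a\in R[[X]]:v_0(a)\ge\beta\}$. Here $\beta>0$ since $\rho>\alpha$, and crucially $\beta\notin v(R)$: as $\alpha$ is a nonnegative real not in $v(R)$ it lies outside the group $v(F)$, while $\rho\in v(F)$, so $\rho-\alpha$ cannot lie in $v(F)$.

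The heart of the argument is to prove $A=MA$, where $M$ is the maximal ideal of the local ring $R[[X]]$ (so $\mathfrak m_R\subseteq M$ and $X\in M$). This is where $\beta\notin v(R)$ is decisive: for $a=\sum a_nX^n\in A$ each coefficient satisfies $v(a_n)\ge\beta$, hence $v(a_n)>\beta$ strictly. Splitting $a=a_0+X\tilde a$ with $\tilde a=\sum_{n\ge1}a_nX^{n-1}$, the tail lies in $MA$ because $X\in M$ and $v_0(\tilde a)\ge v_0(a)\ge\beta$; for the constant term (trivial if $a_0=0$), density of $v(R)$ lets me choose $s\in\mathfrak m_R$ with $0<v(s)<v(a_0)-\beta$, whence $a_0s^{-1}\in A$ and $a_0=s\,(a_0s^{-1})\in MA$. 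Thus $A\subseteq MA$, so $A=MA$. If $A$ were finitely generated, Nakayama's lemma would force $A=0$, contradicting $A\ne0$ (it contains $rf$). Hence $A\cong K$ is not finitely generated, $J$ is not finitely presented, and $R[[X]]$ is not coherent.

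I expect the main obstacle to be precisely the possibility that a finite generating set of $A$ contains elements of valuation $v_0=\beta$: because $v_0$ actually attains the value $\beta$ on $R[[X]]$, a naive ``smallest valuation among the generators'' argument cannot by itself exclude finite generation. The equality $A=MA$, driven by the strict inequalities $v(a_n)>\beta$ that $\beta\notin v(R)$ forces on every coefficient, combined with Nakayama's lemma, is exactly what overcomes this difficulty.
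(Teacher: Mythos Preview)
Your argument is correct and follows the same overall strategy as the paper: both proofs reduce the question to showing that the ideal
\[
A=\{a\in R[[X]]:v_0(a)\ge v(r)-\alpha\}
\]
(your $A$, the paper's $I$) is not finitely generated, using that $v(r)-\alpha\notin v(R)$. The reduction step is also essentially the same---the paper phrases it as $R[[X]]f\cap R[[X]]r=If$ inside the short exact sequence $0\to R[[X]]f\cap R[[X]]r\to R[[X]]^2\to J\to 0$, while you project the syzygy module onto its first coordinate; these are the same isomorphism read two ways.

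Where the arguments diverge is in the endgame. You prove $A=MA$ (splitting off the constant term and using density to factor $a_0=s\cdot(a_0s^{-1})$ with $s\in\mathfrak m_R$), then invoke Nakayama's lemma. The paper instead argues directly: assuming generators $g_1,\dots,g_m$, it sets $\beta':=\min_i v((g_i)_0)$, observes that the constant term of any $R[[X]]$-combination of the $g_i$ has $v$-value at least $\beta'$, notes $\beta'\in v(R)$ forces $\beta'>v(r)-\alpha$, and then produces a constant $t\in R\cap A$ with $v(t)<\beta'$ by density. Both arguments exploit the same key fact you isolated---that every \emph{coefficient} of an element of $A$ has valuation strictly greater than $v(r)-\alpha$, even though $v_0$ itself can hit that value---but your Nakayama formulation packages it more conceptually, while the paper's constant-term trick is slightly more elementary and self-contained.
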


\begin{proof}

First, consider the ideal:

\[
I:=\{g\in R[[X]]:v_0(g)\geq v(r)-\alpha\}.
\]

\noindent Then if $h\in R[[X]]f\cap R[[X]]r$ then $h=gf=yr$ for some $g,y\in R[[X]]$. So since $v_0$ is a valuation, we have that $v_0(h)=v_0(g)+v_0(f)=v_0(y)+v_0(r)$, so since $v_0(f)=\alpha$ and $v_0(r)=v(r)$, it follows that $v_0(g)=v_0(y)+v(r)-\alpha\geq v(r)-\alpha$, and hence $g\in I$.

Conversely, if $g\in I$ then $v_0(gf)=v_0(g)+\alpha\geq v(r)-\alpha+\alpha=v(r)$, so if $gf=\underset{n\in\mathbb{N}}{\sum}{t_nX^{n}}$ then $v(t_n)\geq v(r)$ for all $n\in\mathbb{N}$. So since $R$ is a valuation ring, this means that $r$ divides $t_n$ for all $n\in\mathbb{N}$, which means that $gf=ry$, where $y=\underset{n\in\mathbb{N}}{\sum}{(r^{-1}t_n)X^{n}}\in R[[X]]$, so $gf\in R[[X]]f\cap R[[X]]r$.\\

\noindent Therefore, $R[[X]]f\cap R[[X]]r=If$, so if the intersection is finitely generated, then $If$ is finitely generated. But since $R[[X]]$ is a domain, it follows that if $\{g_1f,\dots,g_mf\}$ is a generating set for $If$ then $\{g_1,\dots,g_m\}$ is a generating set for $I$. So if $g_i:=\underset{n\in\mathbb{N}}{\sum}{r_{i,n}X^n}$ then let $\beta:=\min\{v(r_{i,0}):i=1,\dots,m\}$, and it follows that for every $g=\underset{n\in\mathbb{N}}{\sum}{r_{n}X^n}\in I$, $v(r_0)\geq\beta$, and clearly $\beta\geq\min\{\inf\{v(r_{i,n}):n\in\mathbb{N}\}:i=1,\dots,m\}\geq v(r)-\alpha$.

But since $\alpha\notin v(R)$, it follows that $v(r)-\alpha\notin v(R)$, so since $\beta\in v(R)$ we see that $\beta>v(r)-\alpha$. But $v(R)$ is dense in $\mathbb{R}_{\geq 0}$, so we can find $t\in R$ such that $\beta>v(t)>v(r)-\alpha$, and hence $t=tX^0\in I$ but $v_0(t)=v(t)<\beta$ -- contradiction.\\

\noindent Therefore, $R[[X]]f\cap R[[X]]r$ is not finitely generated. So, consider the exact sequence $0\to R[[X]]f\cap R[[X]]r\to R[[X]]^2\to J\to 0$, then since the kernel is not finitely generated it follows from \cite[Lemma 2.1.1]{Glaz} that $J$ is not finitely presented.\end{proof}

\section{Localisations of $R[[X]]$}

\noindent In this section, we will use our valuation theory to explore a localisation of $R[[X]]$ with useful properties. Let $U:=\{f\in R[[X]]:v_0(f)=0\}$, then since $v_0$ is a valuation, $U$ is a multiplicatively closed subset of $R[[X]]$. So since $R[[X]]$ is commutative, we can consider the localisation $T:=R[[X]]_U$ of $R[[X]]$ at $U$.

\begin{proposition}\label{loc-valuation}
$T$ is a valuation ring.
\end{proposition}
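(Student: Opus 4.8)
The plan is to realise $T$ as the valuation ring of $v_0$ on the field $Q(R[[X]])$, and thereby invoke Lemma \ref{val-ring}. By Corollary \ref{valuation2}, $v_0$ is a valuation on $R[[X]]$, so by Lemma \ref{localisation} (applied with the multiplicative set $R[[X]]\setminus\{0\}$) it extends uniquely to a valuation, still denoted $v_0$, on the fraction field $Q(R[[X]])$. Set $V:=\{x\in Q(R[[X]]):v_0(x)\geq 0\}$; by the first characterisation in Lemma \ref{val-ring} this $V$ is a valuation ring. Since every element of $T$ has the form $fu^{-1}$ with $f\in R[[X]]$ and $u\in U$, and $v_0(fu^{-1})=v_0(f)-v_0(u)=v_0(f)\geq 0$, we have $T\subseteq V$. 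The whole problem therefore reduces to proving the reverse inclusion $V\subseteq T$.

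First I would record two elementary reductions. Since $v_0(X)=v(1)=0$ we have $X\in U$, so $X^{-1}\in T$ and hence $R[[X]][X^{-1}]\subseteq T$; thus it suffices to show $V\subseteq R[[X]][X^{-1}]$. To do this I would locate $Q(R[[X]])$ inside the Laurent series field $F((X))$: as $R[[X]]\subseteq F[[X]]\subseteq F((X))$ and $F((X))$ is a field, we get $Q(R[[X]])\subseteq F((X))$, so every $x\in Q(R[[X]])$ is a Laurent series $\sum_{n\geq n_0}a_nX^n$ with $a_n\in F$ and support bounded below. The key point is that the extended valuation is the Gauss valuation $v_0\!\left(\sum_n a_nX^n\right)=\inf_n v(a_n)$. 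Indeed, the coefficientwise infimum defines a valuation $w$ on $F((X))$ (by the same associated graded argument as in Section 2, now carried out over the field $F$, whose graded ring is still a domain), and $w$ restricts to $v_0$ on $R[[X]]$; by the uniqueness of the extension in Lemma \ref{localisation}, $w=v_0$ on all of $Q(R[[X]])$.

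Granting this, the reverse inclusion is immediate: if $x=\sum_{n\geq n_0}a_nX^n\in V$ then $\inf_n v(a_n)=v_0(x)\geq 0$, so $v(a_n)\geq 0$, i.e.\ $a_n\in R$, for every $n$; hence $x\in R((X))=R[[X]][X^{-1}]\subseteq T$. This yields $V\subseteq T$, so $T=V$ is a valuation ring. I expect the main obstacle to be the justification that the extended $v_0$ is genuinely computed coefficientwise, equivalently that $v_0(x)\geq 0$ forces all coefficients of $x$ into $R$ even in the delicate case where $v_0(x)$ is an infimum that is not attained and does not lie in $v(R)$ (the same phenomenon underlying the incoherence in Theorem \ref{B}). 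Concretely this is the case $x=f/g$ with $v_0(f)=v_0(g)\notin v(R)$, where no constant $c\in R$ satisfies $v_0(f)\geq v(c)\geq v_0(g)$, so the naive strategy of clearing $g$ by an intermediate constant (using the divisibility criterion $c\mid w\iff v_0(w)\geq v(c)$ implicit in the proof of Theorem \ref{B}) breaks down; the Gauss valuation description is precisely what resolves this uniformly.
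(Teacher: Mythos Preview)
Your overall plan---show $T=\{x\in Q(R[[X]]):v_0(x)\geq 0\}$ and invoke Lemma \ref{val-ring}---is exactly the paper's strategy, but your execution of the inclusion $V\subseteq T$ contains a genuine error. The claim that the unique extension of $v_0$ to $Q(R[[X]])$ is computed by the coefficientwise formula $w\!\left(\sum_n a_n X^n\right)=\inf_n v(a_n)$ on Laurent series over $F$ is false, and with it the reduction to $V\subseteq R[[X]][X^{-1}]$ collapses. Take any $\pi\in R$ with $v(\pi)>0$ and set $u=\pi+X$. Then $v_0(u)=\min\{v(\pi),0\}=0$, so $u\in U$ and $u^{-1}\in T\subseteq V$, with $v_0(u^{-1})=0$. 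But in $F[[X]]$ one has
\[
u^{-1}=\pi^{-1}\sum_{k\geq 0}(-\pi^{-1}X)^k=\sum_{k\geq 0}(-1)^k\pi^{-k-1}X^k,
\]
whose $k$th coefficient has valuation $-(k+1)v(\pi)\to -\infty$. Hence $u^{-1}\notin R((X))=R[[X]][X^{-1}]$, and the ``Gauss valuation'' $w(u^{-1})$ is $-\infty\neq 0=v_0(u^{-1})$. So $w$ is not even a real-valued function on $Q(R[[X]])$, the uniqueness argument from Lemma \ref{localisation} cannot be applied to it, and the assertion $V\subseteq R[[X]][X^{-1}]$ is simply false. (The appeal to Section 2 ``carried out over $F$'' fails for the same reason: Lemma \ref{attained} uses $v(r_n)\geq 0$, and without that bound $v_\lambda$ can be $-\infty$.) Your closing paragraph correctly flags the unattained-infimum case $v_0(f)=v_0(g)\notin v(R)$ as delicate, but the problem is not confined to that case and is not resolved by the Gauss description; the example above has $v_0(u)=0\in v(R)$.

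The paper avoids $F((X))$ entirely. It shows that every nonzero $f\in R[[X]]$ factors as $f=r u$ or has $fg=r u$ for suitable $g\in R[[X]]$, $r\in R\setminus\{0\}$, $u\in U$ (the second alternative handling exactly the case $v_0(f)\notin v(R)$, via the density of the value group), so that the localisation at $\{ru:r\in R\setminus\{0\},\,u\in U\}$ is already the full fraction field. One then checks directly, using only divisibility in $R$, that $v_0(x)\geq 0$ forces $x\in T$: writing $x=f(ru)^{-1}$, the inequality $v_0(f)\geq v(r)$ gives $r\mid f$ in $R[[X]]$, so $x=(r^{-1}f)u^{-1}\in R[[X]]_U=T$.
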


\begin{proof}

Let $V:=\{ru:r\in R\backslash\{0\},u\in U\}$. Then $V$ is a multiplicatively closed subset of $R[[X]]$, so let $K:=R[[X]]_V$, and since $U\subseteq V$ it is clear that $T$ is a subring of $K$.\\

\noindent By Lemma \ref{localisation}, the valuation $v_0$ extends uniquely to any localisation of $R[[X]]$, so it follows that $K$ and $T$ both carry valuations that restrict to $v_0$ on $R[[X]]$. We will prove that $K$ is a field and that $T:=\{x\in K:v_0(x)\geq 0\}$, and it will follow from Lemma \ref{val-ring} that $T$ is a valuation ring.\\

\noindent To prove that $K$ is a field, it suffices to show that every non-zero element of $R[[X]]$ is a unit in $K$. Suppose that $0\neq f\in R[[X]]$ and $v_0(f)=\alpha\in\mathbb{R}_{\geq 0}$, we will prove that $f$ is a unit in $K=R[[X]]_V$:\\

\noindent Firstly, if $\alpha\in v(R)$ then $\alpha=v(r)$ for some $r\in R\backslash 0$, and if $f=\underset{n\in\mathbb{N}}{\sum}{r_nX^n}$, then $v(r)=\alpha=v_0(f)=\inf\{v(r_n):n\in\mathbb{N}\}$, and hence $v(r_n)\geq v(r)$ for all $n$ and $r^{-1}r_n\in R$. So let $u:=\underset{n\in\mathbb{N}}{\sum}{(r^{-1}r_n)X^n}\in R[[X]]$, then $f=ru$, and if $v_0(u)>0$ then there exists $\varepsilon>0$ such that $v(r^{-1}r_n)\geq\epsilon$ for all $n$, and hence $v(r_n)\geq v(r)+\epsilon$, so $v_0(f)\geq v(r)+\epsilon>v(r)=\alpha=v_0(f)$ -- contradiction. Therefore $v_0(u)=0$ and $u\in U$, so $f=ru\in V$ is a unit in $K=R[[X]]_V$.

On the other hand, if $\alpha\notin v(R)$, then choose $\beta\in v(R)$ with $\beta>\alpha$. Then since $v(R)$ is dense in $\mathbb{R}_{\geq 0}$, there exists a sequence $(\gamma_n)$ in $v(R)$ such that $\gamma_n\geq\gamma_{n+1}>\beta-\alpha$ for each $n$ and $\gamma_n\rightarrow\beta-\alpha$ as $n\rightarrow\infty$. So choose $s_n\in R$ such that $v(s_n)=\gamma_n$ for each $n$, and let $g:=\underset{n\in\mathbb{N}}{\sum}{s_nX^n}$, then $v_0(g)=\inf\{v(s_n):n\in\mathbb{N}\}=\inf\{\gamma_n:n\in\mathbb{N}\}=\beta-\alpha$. Therefore, $v_0(fg)=v_0(f)+v_0(g)=\alpha+\beta-\alpha=\beta\in v(R)$, so by the above, $fg$ is a unit in $K$, and hence $f$ is a unit in $K$.\\

\noindent Finally, it is clear that for any $x=fu^{-1}\in T$, $v_0(x)=v_0(f)-v_0(u)=v_0(f)\geq 0$, so it remains to prove that if $x\in K$ and $v_0(x)\geq 0$ then $x\in T$. So, $x=f(ru)^{-1}$ for some $r\in R$, $u\in U$, and $v_0(x)=v_0(f)-v_0(ru)\geq 0$, i.e. $v_0(f)\geq v(r)$. So, if $f=\underset{n\in\mathbb{N}}{\sum}{r_nX^n}$, then $v_0(f)=\inf\{v(r_n):n\in\mathbb{N}\}\geq v(r)$, so $v(r_n)\geq v(r)$ for all $n$ and $r^{-1}r_n\in R$. So let $g:=\underset{n\in\mathbb{N}}{\sum}{(r^{-1}r_n)X^n}\in R[[X]]$, then $f=rg$, so $x=f(ru)^{-1}=fr^{-1}u^{-1}=gu^{-1}\in R[[X]]_U=T$ as required.\end{proof}

\noindent Now, recall from \cite{Glaz} that if $S$ is a commutative ring and $M$ is an $S$-module, then a submodule $N$ of $M$ is a \emph{pure submodule} if for any $S$-module $L$, the natural map $N\otimes_S L\to M\otimes_S L$ is injective. It follows from \cite[Theorem 1.2.14(5)]{Glaz} that if $M$ is a flat $S$-module and $N$ is a pure submodule then $M/N$ is a flat $S$-module.

\begin{proposition}\label{pure}

$R[[X]]$ is a pure $R$-submodule of $T$, and more generally, for any indexing set $I$, the direct product $R[[X]]^I$ is a pure $R$-submodule of $T^I$.

\end{proposition}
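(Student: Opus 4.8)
The plan is to reduce the whole statement to a single divisibility identity in $T$ and then pass from that identity to purity by a soft argument. The heart of the proof will be the claim that for every $r\in R$ (the case $r=0$ being trivial)
\[
R[[X]]\cap rT = rR[[X]].
\]
The inclusion $\supseteq$ is immediate, so the content is $\subseteq$. Suppose $h=\sum_n h_nX^n\in R[[X]]$ can be written as $h=rt$ with $t\in T$. Since $v_0$ extends to a valuation on the localisation $T$ (Lemma \ref{localisation}) and $v_0(t)\ge 0$ for every $t\in T$ (Proposition \ref{loc-valuation}), multiplicativity of the valuation gives $v_0(h)=v(r)+v_0(t)\ge v(r)$. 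Hence $v(h_n)\ge v_0(h)\ge v(r)$ for every $n$, so $r\mid h_n$ in the valuation ring $R$, and therefore $h/r=\sum_n(r^{-1}h_n)X^n\in R[[X]]$, i.e. $h\in rR[[X]]$. This identity says precisely that $R[[X]]$ is a relatively divisible submodule of $T$, equivalently that the cokernel $T/R[[X]]$ is $R$-torsion-free.

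I would then deduce purity from this. Since $R$ is a valuation domain, an $R$-module is flat if and only if it is torsion-free, so the identity shows that $T/R[[X]]$ is flat; and whenever the cokernel of a short exact sequence $0\to N\to M\to M/N\to 0$ is flat, the sequence is pure, because the long exact $\mathrm{Tor}$-sequence then gives $\mathrm{Tor}_1^R(M/N,L)=0$ and hence injectivity of $N\otimes_R L\to M\otimes_R L$ for every $R$-module $L$. (Alternatively one may simply quote the standard fact that over a Prüfer domain relative divisibility and purity coincide.) Either way, $R[[X]]$ is a pure $R$-submodule of $T$.

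For the product statement the same computation works coordinatewise. Given $(t_i)_{i\in I}\in R[[X]]^I\cap rT^I$, each coordinate lies in $R[[X]]\cap rT=rR[[X]]$, say $t_i=rh_i$ with $h_i\in R[[X]]$, whence $(t_i)_i=r(h_i)_i\in rR[[X]]^I$; thus $R[[X]]^I\cap rT^I=rR[[X]]^I$ for all $r\in R$. Equivalently $T^I/R[[X]]^I$ is $R$-torsion-free, hence flat over the valuation domain $R$, and so $R[[X]]^I$ is pure in $T^I$ by the same cokernel argument.

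The only genuine obstacle is the displayed identity, and specifically its $\subseteq$ direction: everything hinges on $v_0$ being an honest valuation (so that $v_0(rt)=v(r)+v_0(t)$, using Corollary \ref{valuation2} and Lemma \ref{localisation}) and on $R$ being a valuation ring (so that the coefficientwise inequality $v(h_n)\ge v(r)$ forces $r\mid h_n$). Once this is in place, the passage to purity and the extension to arbitrary products are purely formal.
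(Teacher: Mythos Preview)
Your proof is correct and follows essentially the same line as the paper: both reduce to the divisibility identity $R[[X]]\cap rT=rR[[X]]$, prove it via $v_0(h)=v(r)+v_0(t)\ge v(r)$ and coefficientwise division in the valuation ring $R$, and then extend to products coordinatewise. The only cosmetic difference is that the paper passes from the identity to purity by directly invoking the finitely-generated-ideal criterion for purity \cite[Theorem 1.2.14(5)]{Glaz}, whereas you go through ``torsion-free cokernel $\Rightarrow$ flat cokernel $\Rightarrow$ pure''---but you yourself note this alternative, and both routes are standard.
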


\begin{proof}

Using \cite[Theorem 1.2.14(5)]{Glaz}, we only need to prove that for any finitely generated ideal $J$ of $R$, $JT^I\cap R[[X]]^I=JR[[X]]^I$. Since $R$ is a valuation ring, $J=aR$ for some $a\in R$, so we need only prove that $aT^I\cap R[[X]]^I=aR[[X]]^I$.\\

\noindent In fact, if we proved that $R[[X]]$ is a pure submodule of $T$, i.e. $aT\cap R[[X]]=aR[[X]]$, then if $(x_i)_{i\in I}\in aT^I\cap R[[X]]^I$ then $x_i\in aT\cap R[[X]]=aR[[X]]$ for each $i$, and hence $(x_i)_{i\in I}\in (aR[[X]])^I=aR[[X]]^I$, and it follows that $R[[X]]^I$ is a pure $R$-submodule of $T^I$.\\

\noindent So, suppose $f\in aT\cap R[[X]]$, then $f\in R[[X]]$ and $f=agu^{-1}$ for some $g\in R[[X]]$, $u\in U$, thus $ag=fu$. Since $v_0(u)=0$ we see that $v_0(f)=v_0(f)+v_0(u)=v_0(fu)=v_0(ag)=v_0(a)+v_0(g)\geq v_0(a)=v(a)$.\\

\noindent Therefore, if $f=\underset{n\in\mathbb{N}}{\sum}{r_nX^n}$ then $v_0(f)=\inf\{v(r_n):n\in\mathbb{N}\}\geq v(a)$, so $v(r_n)\geq v(a)$ for all $n$, which means that $a^{-1}r_n\in R$ for all $n$. So let $h:=\underset{n\in\mathbb{N}}{\sum}{(a^{-1}r_n)X^n}\in R[[X]]$, and $f=ah\in aR[[X]]$ as required.\end{proof}

\section{Flat dimension of $R[[X]]$-modules}

\noindent In this section, we will prove our main result, providing an example of an $R[[X]]$-module $C$ that is flat over $R$, but has flat dimension at least 2 over $R[[X]]$, thus contradicting \cite[Corollary 3.2]{flaw}. The proof is heavily inspired by the proof of \cite[Theorem 7.2.2]{Glaz}\\

\noindent Using Theorem \ref{B}, we know that $R[[X]]$ is not a coherent ring, and therefore by \cite[Theorem 2.3.2(4)]{Glaz} there exists an indexing set $I$ such that the direct product $R[[X]]^I$ is not flat over $R[[X]]$. As in the previous section, we let $T:=R[[X]]_{U}$ where $U=\{f\in R[[X]]:v_0(f)=0\}$, and let $C:=T^I/R[[X]]^I$.

\begin{lemma}\label{flat/R}

$T^I$ is flat over $R[[X]]$ and $R$, and $C$ is flat over $R$.

\end{lemma}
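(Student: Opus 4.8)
The plan is to establish the three flatness claims in Lemma \ref{flat/R} in a natural order, exploiting the localisation and purity results already proved. First I would show that $T^I$ is flat over $R[[X]]$. By Lemma \ref{localisation}, the localisation $T = R[[X]]_U$ is a flat $R[[X]]$-module. Since $T$ is flat over $R[[X]]$ and $R[[X]]$ is a commutative ring, flatness passes to arbitrary direct products precisely when $R[[X]]$ is coherent; but we have just proved $R[[X]]$ is incoherent, so I cannot use that route directly. Instead I would observe that $T$ is not merely flat but in fact a localisation, hence $T^I \cong (R[[X]]^I)_U$ as $R[[X]]$-modules — localisation commutes with arbitrary products because inverting the single multiplicative set $U$ acts componentwise and a compatible family of denominators can always be cleared simultaneously. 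Then $T^I = (R[[X]]^I)_U$ is a localisation of the $R[[X]]$-module $R[[X]]^I$, and localisation is an exact functor, so $T^I$ is obtained from $R[[X]]^I$ by a flat base change $R[[X]] \to T$. The cleanest statement is that $T^I \cong T \otimes_{R[[X]]} R[[X]]^I$ is false in general (this is exactly the incoherence obstruction), so the correct argument is that $T^I$, being the localisation $(R[[X]]^I)_U$, is a $T$-module, and every $T$-module restricts to a flat $R[[X]]$-module because $T$ itself is $R[[X]]$-flat. More precisely, any $T$-module is a direct limit of free $T$-modules, and since $T$ is $R[[X]]$-flat, each free $T$-module is $R[[X]]$-flat, and direct limits of flats are flat; hence $T^I$ is flat over $R[[X]]$.

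Next I would deduce flatness of $T^I$ over $R$. This is immediate by transitivity of flatness: by Lemma \ref{localisation} the ring $R[[X]]$ is flat over $R$ (it is a free $R$-module, being $\bigoplus$-type on the monomials $X^n$, or at worst a direct product which is still flat since $R$ is a valuation hence coherent domain), and we have just shown $T^I$ is flat over $R[[X]]$. A module that is flat over $R[[X]]$, which is in turn flat over $R$, is flat over $R$. So $T^I$ is flat over $R$.

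Finally, and this is where the purity results earn their place, I would prove $C = T^I / R[[X]]^I$ is flat over $R$. By Proposition \ref{pure}, $R[[X]]^I$ is a \emph{pure} $R$-submodule of $T^I$. By the cited consequence of \cite[Theorem 1.2.14(5)]{Glaz}, if $M$ is a flat $R$-module and $N$ is a pure $R$-submodule of $M$, then the quotient $M/N$ is again flat over $R$. Applying this with $M = T^I$ (flat over $R$, just established) and $N = R[[X]]^I$ (pure in $T^I$ over $R$, by Proposition \ref{pure}) gives immediately that $C = T^I/R[[X]]^I$ is flat over $R$, completing the lemma.

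The main obstacle is the very first step: asserting flatness of the \emph{product} $T^I$ over $R[[X]]$ without illegitimately invoking coherence. The tempting but wrong move is to write $T^I = T \otimes_{R[[X]]} R[[X]]^I$ and conclude flatness from flatness of $T$; this fails exactly because $R[[X]]$ is incoherent, so the canonical map $T \otimes_{R[[X]]} R[[X]]^I \to T^I$ need not be an isomorphism (indeed its failure is the incoherence we exploited in Theorem \ref{B}). The correct and robust argument sidesteps the tensor product entirely: one identifies $T^I$ with the localisation $(R[[X]]^I)_U$ (which holds because $U$ is a single multiplicative set acting componentwise), observes this makes $T^I$ a $T$-module, and uses that any $T$-module is $R[[X]]$-flat because $T$ is $R[[X]]$-flat and restriction of scalars along a flat ring map sends flat modules to flat modules. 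I would take care to justify the identification $T^I \cong (R[[X]]^I)_U$ explicitly, as this is the crux.
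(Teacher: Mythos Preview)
Your proof of the second and third claims ($T^I$ flat over $R$, and $C$ flat over $R$) is correct and matches the paper's argument exactly. The gap is in your first step, where two of your assertions are false.

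First, the claim that ``every $T$-module restricts to a flat $R[[X]]$-module because $T$ itself is $R[[X]]$-flat'' is wrong. Restriction along a flat map $R[[X]]\to T$ sends \emph{flat} $T$-modules to flat $R[[X]]$-modules, not arbitrary $T$-modules. For instance, the residue field $T/\mathfrak m$ of the valuation ring $T$ is a $T$-module, but any $a\in R$ with $v(a)>0$ annihilates it, so it has $R[[X]]$-torsion and is not $R[[X]]$-flat. Relatedly, your sentence ``any $T$-module is a direct limit of free $T$-modules'' is Lazard's theorem and applies only to \emph{flat} $T$-modules, so it cannot be invoked for $T^I$ without first knowing that $T^I$ is $T$-flat. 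Second, the identification $T^I\cong (R[[X]]^I)_U$ is suspect: localisation commutes with direct sums, not with arbitrary products, and an element $(f_i u_i^{-1})_{i\in I}\in T^I$ need not admit a common denominator $u\in U$.

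The paper's argument supplies exactly the missing ingredient you need: Proposition~\ref{loc-valuation} shows that $T$ is a \emph{valuation ring}, hence coherent, and therefore $T^I$ is flat over $T$ by \cite[Theorem 2.3.2(4)]{Glaz}. Since $T$ is flat over $R[[X]]$ (being a localisation), composition of flatness gives $T^I$ flat over $R[[X]]$. So the point you were struggling to sidestep---getting flatness of a product without coherence of the base---is resolved by changing the base to the coherent ring $T$, which is the whole purpose of Proposition~\ref{loc-valuation}.
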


\begin{proof}

We know that $T$ is a valuation ring by Proposition \ref{loc-valuation}, and hence it is coherent. Therefore, by \cite[Theorem 2.3.2(4)]{Glaz}, $T^I$ is a flat $T$-module. But $T$ is a localisation of $R[[X]]$, and hence $T$ is flat over $R[[X]]$ by Lemma \ref{localisation}, thus $T^I$ is flat over $R[[X]]$. 

Moreover, since $R$ is coherent and $R[[X]]\cong\underset{n\in\mathbb{N}}{\prod}{R}$ as an $R$-module, $R[[X]]$ is a flat $R$-module by \cite[Theorem 2.3.2(4)]{Glaz}, and hence $T^I$ is a flat $R$-module.\\

\noindent Since $T^I$ is flat over $R$, and $R[[X]]^I$ is a pure $R$-submodule of $T^I$ by Proposition \ref{pure}, it follows from \cite[Theorem 1.2.14(5)]{Glaz} that $C=T^I/R[[X]]^I$ is a flat $R$-module.\end{proof}

\noindent We are now ready to prove our main theorem:\\

\noindent\emph{Proof of Theorem \ref{A}.} We know that $C$ is flat over $R$ by Lemma \ref{flat/R}, so clearly f.dim$_R(C)=0$. Let us suppose, for contradiction, that f.dim$_{R[[X]]}(C)\leq 1$, i.e. $Tor^{R[[X]]}_2(C,N)=0$ for all $R[[X]]$-modules $N$. Then there exists a long exact sequence:\\

$0\to Tor^{R[[X]]}_1(R[[X]]^I,N)\to Tor^{R[[X]]}_1(T^I,N)\to Tor^{R[[X]]}_1(C,N)$\\

$\to R[[X]]^I\otimes_{R[[X]]}N\to T^I\otimes_{R[[X]]}N\to C\otimes_{R[[X]]}N\to 0$\\

\noindent But we know from Lemma \ref{flat/R} that $T^I$ is flat over $R[[X]]$, and hence $Tor^{R[[X]]}_1(T^I,N)=0$, so it follows that $Tor^{R[[X]]}_1(R[[X]]^I,N)=0$. Since this is true for all $R[[X]]$-modules $N$ it follows that $R[[X]]^I$ is flat over $R[[X]]$, contradicting our original assumption.\qed\\

\noindent In fact, we could use the same argument to show that for all $i$ and $N$,  $Tor^{R[[X]]}_{i+1}(C,N)\cong Tor^{R[[X]]}_i(R[[X]]^I,N)$, and hence f.dim$_{R[[X]]}(C)=$ f.dim$_{R[[X]]}(R[[X]]^I)+1$. So if we could show that there exists an indexing set $I$ such that f.dim$_{R[[X]]}(R[[X]]^I)>$ w.dim$(R)$, it would follow that w.dim$(R[[X]])>$ w.dim$(R)+1$, thus showing that the weak Hilbert's syzygy theorem fails to hold for power series over coherent rings, even when the coefficient ring is a valuation ring of rank 1.

\end{document}